\newtheorem{thm}{Theorem}[section]
\newtheorem{cor}{Corollary}[section]
\newtheorem{lemm}{Lemma}[section]
\newtheorem{example}{Example}
\newtheorem{que}{Question}
\newtheorem{definition}{Definition}[section]
\newtheorem{rem}{Remark}
\numberwithin{equation}{section}
\journal{\empty}
\begin{document}

\begin{frontmatter}

\title{\textsc{Baker omitted value}}

\author{Tarun Kumar Chakra}
\author{Gorachand Chakraborty}
\author[]{Tarakanta Nayak\corref{cor1}\fnref{1}}
\ead{tnayak@iitbbs.ac.in}
\address{School of Basic Sciences, Indian Institute of Technology Bhubaneswar, India}

\cortext[cor1]{Corresponding author}
\fntext[1]{The corresponding author is supported by Department of Science and Technology, Govt of India through a Fast Track Project (SR-FTP-MS019-2011).}
\begin{abstract}
	 We define  Baker omitted value, in short \textit{$\textit{bov}$}, of an entire or meromorphic function $f$ in the complex plane as an
	 omitted value for which there exists $r_0>0$ such that for each ball $D_r (a)$ centered at $a$ and with radius $r$ satisfying $0<r<r_{0}$, every component of the boundary of $f^{-1}(D_r (a))$ is bounded. The \textit{$\textit{bov}$} of a function is the only  asymptotic value.  An entire function has   \textit{$\textit{bov}$} if and only if the image of every unbounded curve is unbounded. It follows that an entire function has \textit{$\textit{bov}$} whenever it has a Baker wandering domain. Functions with \textit{$\textit{bov}$} has at most one completely invariant Fatou component. Some examples and counter examples are discussed and questions are proposed for further investigation.
\end{abstract}
\begin{keyword}
	Omitted value \sep Baker wandering domain \sep Meromorphic function \sep Entire function\sep Completely invariant Fatou component.

\MSC[2010] 37F10 \sep 30D05 \sep 37F50
\end{keyword}
\date{\empty}
\end{frontmatter}
\section{Introduction}
Let $f:\mathbb{C}\rightarrow \widehat{\mathbb{C}}=\mathbb{C} \bigcup \{\infty\}$ be a transcendental  entire or meromorphic function with a single essential singularity which we choose to be at $\infty$. A point $a\in \widehat{\mathbb{C}}$ is called a singular value of $f$ if for every open neighborhood $U$ of $a$, there exists a component $V$ of $f^{-1}(U)$ such that $f:V\rightarrow U$ is not injective.  The set of singular values is the closure of all critical values and asymptotic values of $f$. A critical value is the image of a critical point, that is, $f(z_{0})\ \mbox{where}\ f'(z_{0})=0$. A point $a\in \widehat{\mathbb{C}}$ is an asymptotic value of $f$ if there exists a curve $\gamma:[0,\infty)\rightarrow \mathbb{C}$ with $\lim_{t\rightarrow \infty} \gamma(t)=\infty$ such that $ \lim_{t\rightarrow \infty} f(\gamma(t))=a$. An equivalent definition of singular values which is more relevant for this article follows~\cite{berg-ermk}. For $a\in \widehat{\mathbb{C}}$ and $r>0$, let $D_{r}(a)$ be a ball 
with respect to the spherical metric  and choose a component $U_{r}$ of
$f^{-1}(D_{r}(a))$ in such a way that $U_{r_{1}}\subset U_{r_{2}}$ for $0<r_{1}<r_{2}.$ There are two possibilities.
\begin{enumerate}
	\item $\bigcap_{r>0}U_{r}= \{z\}\ \mbox{for}\ z\in\mathbb{C}$: Then $f(z)=a.$ The point $z$ is called an ordinary point if (i) $
	f'(z)\neq 0\ \mbox{and}\ a\in\mathbb{C}$, or (ii) $ z\ \mbox{is\ a\ simple\ pole}.$ The point $z$ is called a critical point if
	$f'(z)=0$ and $a\in \mathbb{C}$, or $z$ is a multiple pole. In
	this case, $a$ is called a critical value and we say that a critical
	point or algebraic singularity lies over $a$.
	
	\item $\bigcap_{r>0}U_{r}= \emptyset$: The choice $r \mapsto U_{r}$ defines a transcendental singularity of $f^{-1}$. We say a
	transcendental singularity lies over $a$. There is a transcendental singularity over $a$ if and only if $a$ is an asymptotic value. The pre-image of every ball centered at an asymptotic value has at least an unbounded component. The singularity lying
	over $a$ is called direct if there exists $r>0$ such that
	$f(z)\neq a$ for all $z\in U_{r}$. The singularity lying over $a$
	is called logarithmic if $f: U(r)\rightarrow D_{r}(a)\setminus
	\{a\}$ is a universal covering for some $r>0$. A singularity is indirect if it is not direct.
\end{enumerate}

A value $z_{0}\in \widehat{\mathbb{C}}$ is said to be an \textit{omitted value} for the function $f$ if $f(z)\neq z_{0}$ for any $z\in\mathbb{C}$. It is easy to note that each singularity lying over an omitted value is direct. In other words, each component of $f^{-1}(B)$ is unbounded for every ball $B$ centered at an omitted value and with sufficiently small radius. However, more than one singularity can lie over an omitted value. For example, there are infinitely many singularities of the inverse of $\tan e^z$ lying over $i$. This article deals with certain type of omitted values over which there is only one singularity and that is not logarithmic.
\par
Let $D_r(a)$ be the ball around $a$ with radius $r$ with respect to the spherical metric.
\begin{definition}(\textbf{Baker omitted value })
	 An omitted value $a \in \widehat{\mathbb{C}}$ of an entire or meromorphic function $f$ is said to be Baker omitted value, in short \textit{$\textit{bov}$}, if there exists $r_0>0$ such that for all $r$ satisfying $0<r<r_0$, each component of the boundary of $f^{-1}(D_r(a))$ is bounded.
	\end{definition}

It follows that $f^{-1}(D_r(a))$ is infinitely connected and each component of $ \mathbb{C}\setminus f^{-1}(D_r(a))$ is bounded (Lemma~\ref{bov-implication1}).  The \textit{$\textit{bov}$} is the only  asymptotic value of the function (Theorem~\ref{oneav}). Consequently, the \textit{$\textit{bov}$} of an entire function must be $\infty$ and the \textit{$\textit{bov}$} of a meromorphic function is always finite. If $g$ is an entire function with a $\textit{bov}$ then $a$ is the $\textit{bov}$ of the meromorphic function $\frac{1}{g}+a, a \in \mathbb{C}$.
Conversely, if $f$ is a meromorphic function with $\textit{bov}$ $a$ then $\infty$ is the $\textit{bov}$ of the entire function $\frac{1}{f-a}$.
\par
The sequence of iterates $\{f^{n}\}_{n>0}$ of $f$ on a domain $\Omega\subseteq \mathbb{C}$ is said to be \textit{normal} if every sequence of functions in $\{f^{n}\}_{n>0}$ contains a subsequence which converges either to a finite
limit function or to $\infty$ uniformly on each compact subsets of $\Omega$. The set of points $z\in \mathbb{C}$ in a neighborhood of which the sequence of iterates $\{f^{n} \}_{n>0}$ is defined and forms a normal family is called the \textit{Fatou set} of $f$ and is denoted by $\mathcal{F}(f)$. The \textit{Julia set}, denoted by $\mathcal{J}(f)$, is the complement of $\mathcal{F}(f)$ in $\widehat{\mathbb{C}}$. The Fatou set is open and the Julia set is perfect. A maximally connected subset of the Fatou set is called a \textit{Fatou component}. For a Fatou component $U$, $U_{k}$ denotes the Fatou component containing $f^{k}(U)$. A Fatou component $U$ is called \textit{wandering} if $U_{n}\neq U_{m}$ for all $n \neq m$. We say a multiply connected Fatou component $U$ surrounds a point $a \in \mathbb{C}$ if there exists a bounded component of $\mathbb{\widehat{C}}-U$ containing $a$.
\begin{definition}(\textbf{Baker wandering domain})  A Baker wandering domain is a wandering component $U$ of $\mathcal{F}(f)$ such that, for $n$ large enough, $U_{n}$ is  bounded, multiply connected and surrounds $0$, and $f^{n}(z)\rightarrow \infty\ \mbox{as}\ n\rightarrow\
	\infty\ \mbox{for}\ z \in U.$
\end{definition}
Note that if $f$ has a Baker wandering domain then all the Fatou components including the Baker wandering domains are bounded.
\par
Let $E$ denotes the class of all transcendental entire functions
and $M$ denotes the class of transcendental meromorphic functions with at least two poles or one pole that is not an omitted value. An omitted value  of a function  belonging to the class ${M}$  determines a number of important aspects of the dynamics of the function~\cite{tk-zheng}.
In this article, we first state and prove the existence of Baker omitted value and  investigate some of its dynamical implications. Theorem~\ref{boviff} proves that an entire function has the $\textit{bov}$ if and only if the image of every unbounded curve is unbounded. It is proved in Theorem~\ref{bwdbov} that if an entire function has a Baker wandering domain then it has $\textit{bov}$.  A sufficient condition is also found for meromorphic functions to have $\textit{bov}$.
\par
A Fatou component $U$ of a function $f $ is called completely invariant if $f^{-1}(U) \subseteq U$ and $f(U) \subseteq U$. It is proved in Theorem~\ref{onecifc} that every meromorphic function with  $\textit{bov}$ has at most one completely invariant Fatou component. This is a stronger version of an unresolved conjecture which
states that the number of completely invariant Fatou components of a meromorphic function is at most two.
\par
A number of properties of a function with $\textit{bov}$ are found in Section 2. Section 3 discusses the existence of $\textit{bov}$ and relevant examples. We investigate completely invariant Fatou components of functions with $\textit{bov}$ in Section 4. Some questions are proposed for further investigation in Section 5.
 A comparison of properties of entire and meromorphic functions with $\textit{bov}$ is given in Table~\ref{table:nonlin}.
\par
Each function considered in this article is either entire or meromorphic. Every ball is with respect to the spherical metric. We denote the ball centered at $a$ and with radius $r$ by $D_r (a)$  throughout the article. Let $\mathcal{J}_z, z \in \widehat{\mathbb{C}}$ denote the component of the Julia set containing $z$.
\section{Implications}
First, we analyze the pre-image of small balls centered at $\textit{bov}$.
\begin{lemm}
	If $a$ is the $\textit{bov}$ of $f$ then there is $r_0$ such that for all $0< r< r_0$,
	$f^{-1}(D_r (a))$ is infinitely connected and each  component of $ \mathbb{C}\setminus f^{-1}(D_r(a))$ is bounded.
	\label{bov-implication1}
\end{lemm}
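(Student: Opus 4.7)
My plan is to fix $r_0$ as in the definition of $\textit{bov}$ and work with $V := f^{-1}(D_r(a))$ for $0<r<r_0$. Two observations are immediate: every component of $\partial V$ is bounded, by the $\textit{bov}$ hypothesis, and since $a$ is an omitted value each component of $V$ is unbounded (as noted in the introduction: each singularity of $f^{-1}$ lying over an omitted value is direct).

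I would next prove that each component $C$ of $\mathbb{C}\setminus V$ is bounded. A component $K$ of $\partial V$ that meets $C$ must lie entirely in $C$, because $K$ is connected and contained in the closed set $\mathbb{C}\setminus V$; hence $\partial C=\partial V\cap C$ is a union of bounded components of $\partial V$. Granting that $\partial C$ is itself a bounded subset of $\mathbb{C}$, say $\partial C\subseteq\{|z|\leq M\}$, every point of $C\cap\{|z|>M\}$ is interior to $C$, so $C\cap\{|z|>M\}$ is both open and closed in the connected set $\{|z|>M\}$. If it were all of $\{|z|>M\}$, then $V\subseteq\{|z|\leq M\}$ would contradict the unboundedness of the components of $V$; hence $C\cap\{|z|>M\}=\emptyset$ and $C$ is bounded.

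The hardest step, which I expect to be the main obstacle, is to show that $\partial C$ is bounded. My approach would be topological: pass to $\widehat{\mathbb{C}}$, fix a component $V_0$ of $V$ whose closure meets $C$, and consider the component $K_\infty$ of $\widehat{\mathbb{C}}\setminus V_0$ containing $\infty$. Since $V_0$ is a domain in $S^2$, the boundary of $K_\infty$ in $\widehat{\mathbb{C}}$ is connected by the classical theorem on complementary components of planar domains. Combined with the bound on the components of $\partial V_0\subseteq \partial V$, this prevents the bounded components of $\partial V$ inside $C$ from accumulating at $\infty$, yielding the desired bound on $\partial C$.

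Finally, for infinite connectivity of $V$: since $\infty$ is an essential singularity of $f$, the Casorati--Weierstrass theorem gives that $f(\{|z|>M\})$ is dense in $\widehat{\mathbb{C}}$ for every $M>0$, so it meets $\widehat{\mathbb{C}}\setminus\overline{D_r(a)}$. Thus $\mathbb{C}\setminus V$ contains points of arbitrarily large modulus. Being unbounded as a set while having all bounded components (by the preceding step), it must have infinitely many components, and $V$ is infinitely connected.
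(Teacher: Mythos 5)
Your overall strategy matches the paper's: both proofs rest on the same three ingredients (every component of $f^{-1}(D_r(a))$ is unbounded because $a$ is omitted; every component of $\partial f^{-1}(D_r(a))$ is bounded by hypothesis; Casorati--Weierstrass forces infinitely many complementary components). You are in fact more explicit than the paper about the topological step, which the paper compresses into ``the boundary of each of them has an unbounded component'' and ``It follows that\dots''. Your $K_\infty$ idea is the right one, and once carried out it shows that $\partial K_\infty$ is a connected subset of $\partial V_0\cup\{\infty\}$ containing $\infty$, hence would yield an unbounded component of $\partial V$ unless $K_\infty=\{\infty\}$; therefore \emph{every} component of $\mathbb{C}\setminus V_0$ is bounded, and since $C\subseteq\mathbb{C}\setminus V_0$ this makes your intermediate detour through ``$\partial C$ is bounded'' and the clopen argument in $\{|z|>M\}$ unnecessary. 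Your use of Casorati--Weierstrass is the dual of the paper's: you show the complement is an unbounded set all of whose components are bounded, whereas the paper argues that finite connectivity would put a punctured neighbourhood of $\infty$ inside $f^{-1}(D_r(a))$ and contradict $f(f^{-1}(D_r(a)))=D_r(a)$. Both are fine.

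The one genuine omission is that you never prove $f^{-1}(D_r(a))$ is \emph{connected}, which is part of what ``infinitely connected'' asserts, which the paper proves in its first paragraph, and which is used downstream (the proof of Lemma~\ref{bov-implication2} nests preimages precisely because $f^{-1}(D_s(a))$ is a single component). Your final sentence passes from ``the complement has infinitely many components'' to ``$V$ is infinitely connected'' without this. The gap is closable with machinery you already have: if $V_1$ and $V_2$ were distinct components of $f^{-1}(D_r(a))$, then $V_2$ would be an unbounded connected subset of $\mathbb{C}\setminus V_1$, hence contained in a single component of $\mathbb{C}\setminus V_1$ --- but your $K_\infty$ argument applied to $V_1$ shows all such components are bounded. (Equivalently, as the paper says, two disjoint unbounded preimage components would force an unbounded boundary component.) Add that step and the proof is complete.
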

\begin{proof}
By the definition of $\textit{bov}$, there is $r_0>0$ such that for all $r< r_0$, each component of the boundary of $f^{-1}(D_r (a))$ is bounded. Let $r < r_0$. Since $a$ is omitted, each component of $f^{-1}(D_r(a))$ is unbounded.
	If $f^{-1}(D_r(a))$ is disconnected then it has at least two components and each of them is unbounded.
Further, the boundary of each of them has an unbounded component.  However, this is not true proving that $f^{-1}(D_r(a))$ is connected for $r< r_0$.
	\par
	If  $f^{-1}(D_r(a))$ is simply connected or finitely connected then it contains a neighborhood of $\infty$ punctured at $\infty$. By the Casorati-Weirstrass Theorem, $f( f^{-1}(D_r(a)))$ contains $\mathbb{C}$ except possibly two points. This is a contradiction as $f( f^{-1}(D_r(a)))=D_r(a)$. Therefore $f^{-1}(D_r(a))$ is infinitely connected. It follows that each component  of $ \mathbb{C}\setminus f^{-1}(D_r(a))$  is bounded.
\end{proof}
Singular values of a function are restricted whenever
it  has $\textit{bov}$. For proving this, we prove a lemma which may be also of independent importance.

\begin{lemm}
	If $a$ is the $\textit{bov}$ of $f $ and $B$ is any ball not containing $a$ in its closure then
	each component of $f^{-1}(B)$ is bounded.
	\label{bovnotinball}
\end{lemm}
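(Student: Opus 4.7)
The plan is to reduce to Lemma~\ref{bov-implication1} by exploiting the fact that $B$ and a small ball around $a$ can be made disjoint, so that their preimages are disjoint and any component of $f^{-1}(B)$ must sit inside a (bounded) complementary component of $f^{-1}(D_r(a))$.

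First I would choose the radius carefully. Since $a \notin \overline{B}$ and spherical balls are open, there exists $r_1 > 0$ with $\overline{D_{r_1}(a)} \cap \overline{B} = \emptyset$. Shrinking if necessary, I pick $r$ with $0 < r < \min\{r_0, r_1\}$, where $r_0$ is the radius provided by the definition of $\textit{bov}$. Then $D_r(a) \cap B = \emptyset$, and consequently $f^{-1}(D_r(a)) \cap f^{-1}(B) = \emptyset$.

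Next I would invoke Lemma~\ref{bov-implication1}: for this $r$, $f^{-1}(D_r(a))$ is infinitely connected, and crucially every component of $\mathbb{C}\setminus f^{-1}(D_r(a))$ is bounded. Let $V$ be any component of $f^{-1}(B)$. Because $V$ is connected and contained in $\mathbb{C}\setminus f^{-1}(D_r(a))$, it must lie entirely inside a single component of $\mathbb{C}\setminus f^{-1}(D_r(a))$. Since that component is bounded, so is $V$, which gives the conclusion.

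I do not anticipate a genuine obstacle here; the argument is essentially a bookkeeping consequence of Lemma~\ref{bov-implication1}. The only point requiring a moment's care is verifying that the chosen $r$ can simultaneously satisfy the smallness required by the $\textit{bov}$ definition and the disjointness $\overline{D_r(a)} \cap \overline{B} = \emptyset$; this is immediate from the openness of $B$ and $a \notin \overline{B}$, but it is the one place where the hypothesis on $B$ is actually used.
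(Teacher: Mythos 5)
Your proposal is correct and is essentially identical to the paper's own proof: both choose a small ball $D$ around $a$ disjoint from $B$, observe that $f^{-1}(B)\subseteq \mathbb{C}\setminus f^{-1}(D)$, and conclude via Lemma~\ref{bov-implication1} that each component of $f^{-1}(B)$ sits in a bounded complementary component. Your write-up is just slightly more explicit about the choice of radius.
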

\begin{proof}
	If $D$ is a ball centered at $a$ and $D \bigcap B =\emptyset$ then $f^{-1}(B)$ is contained
	in $\mathbb{C} \setminus f^{-1}(D)$. Further, each component of $f^{-1}(B)$ is contained in a component of $\mathbb{C} \setminus f^{-1}(D)$. But each component of $\mathbb{C} \setminus f^{-1}(D)$ is bounded.
	Therefore, each component of $f^{-1}(B)$ is bounded.
\end{proof}
\begin{thm}
	If $a$ is the $\textit{bov}$ of $f$ then $a$ is the only asymptotic value of $f$.
	\label{oneav}
\end{thm}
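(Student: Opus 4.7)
The plan is to argue by contradiction using Lemma~\ref{bovnotinball}. First, I would note that $a$ itself is an asymptotic value: by Lemma~\ref{bov-implication1}, $f^{-1}(D_r(a))$ is connected and its complement has only bounded components, so $f^{-1}(D_r(a))$ contains a punctured neighborhood of $\infty$; any curve in $f^{-1}(D_r(a))$ tending to $\infty$ maps into $D_r(a)$, and by shrinking $r$ we obtain an asymptotic curve for $a$. So the task reduces to showing $a$ is the \emph{only} asymptotic value.

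Suppose, for contradiction, that $b \in \widehat{\mathbb{C}}$ with $b \ne a$ is also an asymptotic value of $f$. Then there is a curve $\gamma:[0,\infty)\to\mathbb{C}$ with $\gamma(t)\to\infty$ and $f(\gamma(t))\to b$. Choose a ball $B = D_\rho(b)$ small enough that $a \notin \overline{B}$. For all $t$ sufficiently large, say $t \ge T$, we have $f(\gamma(t)) \in B$, so the image of $\gamma|_{[T,\infty)}$ lies in $f^{-1}(B)$. Since $\gamma|_{[T,\infty)}$ is a connected, unbounded subset of $\mathbb{C}$, it must be contained in a single component of $f^{-1}(B)$, and that component is necessarily unbounded.

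However, Lemma~\ref{bovnotinball} asserts that every component of $f^{-1}(B)$ is bounded, a contradiction. Hence no such $b$ exists, and $a$ is the only asymptotic value of $f$. The substantive step in this argument is the preparatory Lemma~\ref{bovnotinball}; once that is available, the main theorem follows almost immediately, so no serious obstacle remains in this proof itself beyond setting up the asymptotic curve correctly and choosing $B$ to be disjoint from a neighborhood of $a$.
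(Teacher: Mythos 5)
Your argument is correct and is essentially the paper's own proof: both derive a contradiction by applying Lemma~\ref{bovnotinball} to a small ball $B$ about a putative second asymptotic value $b$ with $a \notin \overline{B}$, since the tail of the asymptotic path would force $f^{-1}(B)$ to have an unbounded component. One small slip in your preliminary remark: $f^{-1}(D_r(a))$ can never contain a punctured neighborhood of $\infty$ (that would contradict the Casorati--Weierstrass theorem, exactly as in the proof of Lemma~\ref{bov-implication1}); however, since that set is connected and unbounded it still contains a curve tending to $\infty$, so your conclusion that $a$ is itself an asymptotic value survives.
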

\begin{proof}
	Let $f$ have an asymptotic value $b$ in $\widehat{\mathbb{C}}$ and $b \neq a$.
	Considering a ball $B$ centered at $b$ whose closure is disjoint from $a$, it follows from the Lemma~\ref{bovnotinball} that
	each component of $f^{-1}(B)$ is bounded. But this is not true as $b$ is an asymptotic value.
\end{proof}
The above result says that if $f$ has $\textit{bov}$ then every other singular value of $f$ is a critical value. Some other important remarks follow.

\begin{rem}
	\begin{enumerate}
		\item Each $\textit{bov}$ is an asymptotic value. In view of Theorem~\ref{oneav}, it is clear that there is at most one $\textit{bov}$ for a function.
		\item The point at $\infty$ is always an asymptotic value for an entire function. Therefore, if an entire function has $\textit{bov}$ then it is $\infty$.
		\item If a meromorphic function has $\textit{bov}$ then it is a finite complex number. This is because $\infty$ is not an omitted value for a meromorphic function.
		\item If a point is finitely taken then it is an asymptotic value. Therefore, if  $a\in \widehat{\mathbb{C}}$ is the $\textit{bov}$ of $f$ then for each $z\in \widehat{\mathbb{C}}-\{a\} $ there are infinitely many points in $\mathbb{C}$ that are mapped to $z$ by $f$.
		In particular, each entire function with $\textit{bov}$ takes every finite point infinitely often and
		each meromorphic function with $\textit{bov}$ has infinitely many poles.
	\end{enumerate}
\label{rem}\end{rem}

\begin{figure}[ht!]
	\centering
	\includegraphics[scale=0.4]{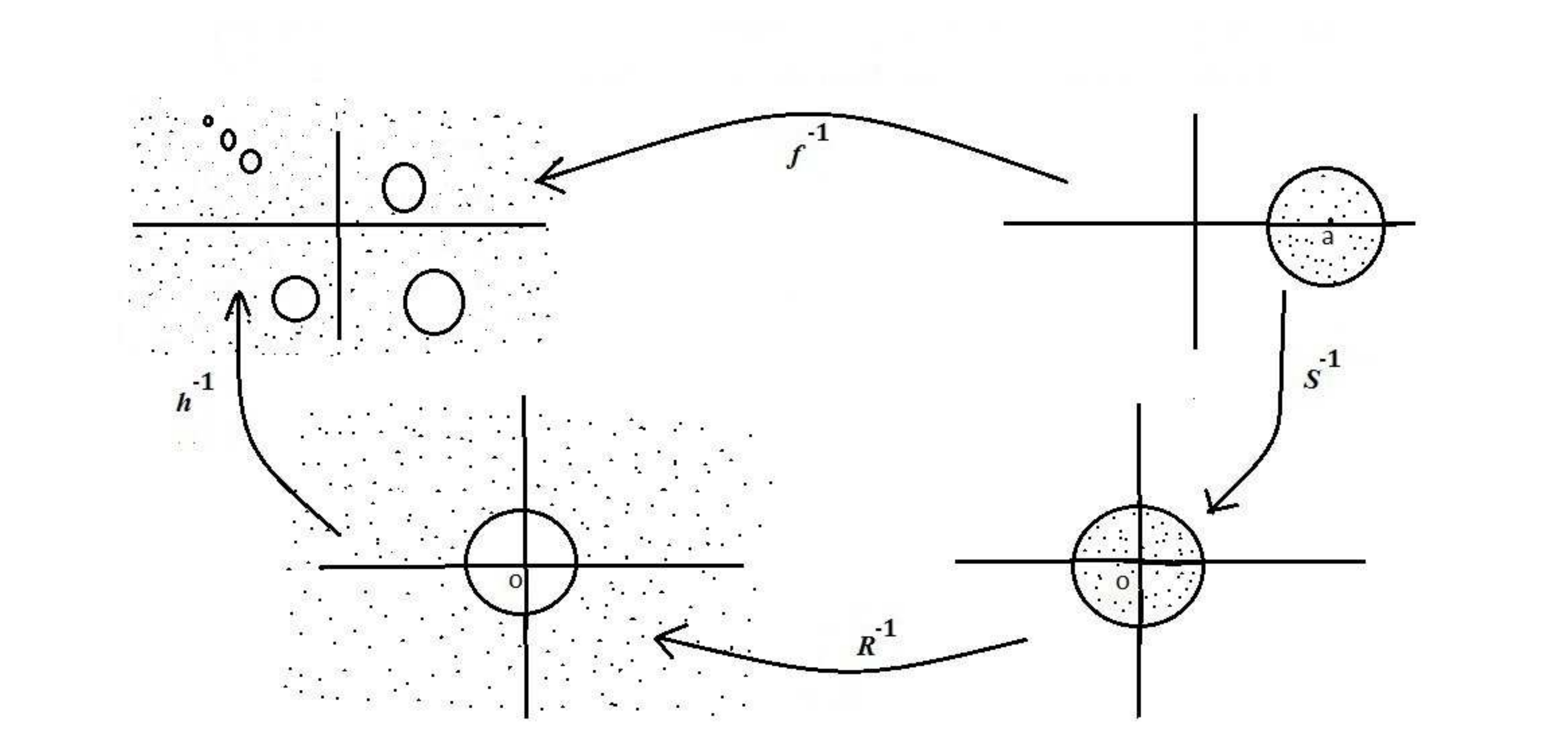}
	\caption{Pre-image of a ball centered at a Baker omitted value of $f$, where $f=\frac{1}{h}+a=S\circ R\circ h; S(z)=z+a, R(z)=1/z$. \label{overflow}}
\end{figure}
It follows from Theorem~\ref{oneav} that an entire function $f$ has $\textit{bov}$ then it has no finite asymptotic value. However,
the converse is not true in general as given in the following example.
\begin{example}
	It is well known that $f(z)= \sin z$  has no finite asymptotic value. Note that each component of the pre-image of every simply connected and bounded domain under an entire function is simply connected (See the proof of Theorem~\ref{boviff}). The pre-image of every simply connected domain $B$ containing $[-1,1]$ is a simply connected domain containing the whole real line.  This is because $\sin z \in [-1,1]$ if and only if $z$ is a real number. If $D$ is any neighborhood of $\infty$ not containing $[-1,1]$ then $\mathbb{C} \setminus D$ is a simply connected domain containing $[-1,1]$. Thus $f^{-1}(\mathbb{C} \setminus D)$ is a simply connected domain containing the real line and $f^{-1}( D)$ is not connected.  Therefore  $\infty$ is not a $\textit{bov}$ of $f$. However, if $D$   contains $-1$ and $1$ then $f^{-1}(\widehat{\mathbb{C}}\setminus D)$ is
	a disjoint union of infinitely many simply connected bounded domains. Thus $f^{-1}( D)$ is infinitely connected and each component of its complement is bounded.
	
\end{example}
Though singularity of $f^{-1}$ at a point signifies the local behavior of a branch of $f^{-1}$ at the point,  it has global implications in case of $\textit{bov}$. Below we state and prove a special property of $\textit{bov}$, special in the sense that what is true for smaller balls around a $\textit{bov}$ is also true for bigger balls.
\begin{lemm}
	Let   $a\in \widehat{\mathbb{C}}$ be the $\textit{bov}$ of $f$. Then for all $r>0$, $f^{-1}(D_r(a))$ is infinitely connected and each component of $ \mathbb{C}\setminus f^{-1}(D_r(a))$  is bounded.
\label{bov-implication2}\end{lemm}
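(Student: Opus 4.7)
My plan is to bootstrap Lemma~\ref{bov-implication1} (which handles $r<r_0$) to arbitrary $r>0$. Fix any $r_1<r_0$ and set $U_1:=f^{-1}(D_{r_1}(a))$. By Lemma~\ref{bov-implication1}, $U_1$ is (connected and) infinitely connected; combined with the fact that every component of $\mathbb{C}\setminus U_1$ is bounded, this forces $\mathbb{C}\setminus U_1$ to have infinitely many bounded components $\{C_n\}_{n\ge 1}$. Given an arbitrary $r>0$, write $U:=f^{-1}(D_r(a))$ and assume $r\ge r_0$ (otherwise Lemma~\ref{bov-implication1} applies directly). Monotonicity $U\supset U_1$ gives the bounded-complement half of the conclusion at once: every component of $\mathbb{C}\setminus U$ lies in some $C_n$, hence is bounded.

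For connectedness of $U$, suppose $W$ is a component of $U$ disjoint from $U_1$. Then $W\subset\mathbb{C}\setminus U_1$; being connected, $W$ lies in a single $C_n$ and is therefore bounded. Since $W$ is an open component of the open set $U$, one gets $f(\partial W)\subset\partial D_r(a)$, making $f|_W\colon W\to D_r(a)$ a proper holomorphic map into a connected Riemann surface. Such a map is surjective, so $a\in f(W)$, contradicting that $a$ is omitted. Hence every component of $U$ meets the connected set $U_1$, and $U$ is connected.

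For infinite connectivity, I produce a component of $\mathbb{C}\setminus U$ inside each $C_n$. Each $C_n$ has non-empty interior: a component of $\mathbb{C}\setminus U_1$ lying entirely in the $1$-dimensional real-analytic set $f^{-1}(\partial D_{r_1}(a))$ would be incompatible with the open-mapping property of $f$ at a regular boundary point. The restriction $f|_{C_n^{\circ}}\colon C_n^{\circ}\to\widehat{\mathbb{C}}\setminus\overline{D_{r_1}(a)}$ is proper --- as $z\to\partial C_n$, $f(z)\to\partial D_{r_1}(a)$, which is disjoint from any compact subset of $\widehat{\mathbb{C}}\setminus\overline{D_{r_1}(a)}$ --- so its image is both open and closed in the connected target and therefore equals the whole of it. Consequently $f(C_n)$ meets the non-empty set $\widehat{\mathbb{C}}\setminus\overline{D_r(a)}$, producing a point of $C_n\cap(\mathbb{C}\setminus U)$; the component of $\mathbb{C}\setminus U$ through such a point lies inside $C_n$. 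Distinct $C_n$'s contribute distinct bounded components of $\mathbb{C}\setminus U$, giving infinitely many. Each such bounded $\mathbb{C}$-component is also a component of $\widehat{\mathbb{C}}\setminus U$ (being separated from $\infty$ and from every other bounded component), so $\widehat{\mathbb{C}}\setminus U$ has infinitely many components and $U$ is infinitely connected.

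The main obstacle is the proper-mapping / surjectivity step on each $C_n$: one must verify both that $C_n^{\circ}\ne\emptyset$ and that $f$'s boundary values on $\partial C_n$ genuinely lie on $\partial D_{r_1}(a)$. Both rely on the standard local analysis of the meromorphic $f$ at regular and critical points of the level curve $f^{-1}(\partial D_{r_1}(a))$, and these are the only places where the analytic (as opposed to purely topological) structure of $f$ is invoked.
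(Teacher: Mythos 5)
Your proof is correct, and while the bounded-complement and connectedness steps coincide with the paper's (monotonicity of preimages, plus the fact that a bounded component of $f^{-1}(D_r(a))$ would map properly, hence surjectively, onto $D_r(a)$ and so attain the omitted value $a$ --- the paper simply cites ``each component of the pre-image of a ball centered at an omitted value is unbounded'' where you supply the proper-map justification), your treatment of infinite connectivity is genuinely different. The paper first conjugates by $z\mapsto 1/z+a$ to reduce to the meromorphic case, and then counts poles: it asserts that each bounded component of $\mathbb{C}\setminus f^{-1}(D_r(a))$ contains a pole and invokes Remark~2.1(4) (a function with \textit{bov} has infinitely many poles, via the finitely-taken-values-are-asymptotic argument). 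You instead work at the level of the complementary islands $C_n$ of the small-radius preimage $U_1$ and show each $C_n^{\circ}$ maps properly onto $\widehat{\mathbb{C}}\setminus\overline{D_{r_1}(a)}$, hence meets $f^{-1}(\widehat{\mathbb{C}}\setminus\overline{D_r(a)})$, planting a distinct complementary component of $f^{-1}(D_r(a))$ in each $C_n$. This buys self-containedness: you avoid both the conjugation and the appeal to the pole count, your argument treats the entire and meromorphic cases uniformly, and it makes explicit the surjectivity fact that the paper leaves implicit in ``each such $D$ contains a pole'' (which is really the same proper-map principle applied to $D^{\circ}$ with target containing $\infty$). The costs are the two technical verifications you flag --- that $C_n^{\circ}\neq\emptyset$ (settled by picking a non-critical point of $C_n$, which exists since critical points are isolated, and applying the local homeomorphism) and that $f(C_n^{\circ})$ genuinely avoids $\overline{D_{r_1}(a)}$ (settled by openness of $f$ on a neighborhood contained in $C_n$) --- neither of which the paper needs to confront in this form.
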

\begin{proof}
	If $f$ is entire with $\textit{bov}$ then $\frac{1}{f} + a, a \in \mathbb{C}$ is meromorphic having $a$ as its $\textit{bov}$. Since $\frac{1}{z} + a$ is a bijection,  we assume without loss of generality that $f$ is a transcendental meromorphic function and $a \in \mathbb{C}$ is the $\textit{bov}$ of $f$ (See Figure 1). Then there exists $r_0>0$ such that for all $r$ satisfying $0<r<r_0$, $f^{-1}(D_r(a))$ is infinitely connected and each component  of $\mathbb{C} \setminus f^{-1}(D_r(a))$ is bounded by Lemma~\ref{bov-implication1}.
\par
For $r>r_0> s$, the set  $f^{-1}(D_r(a))$ contains  $f^{-1}(D_s(a))$. Since $f^{-1}(D_s(a))$ is connected, there is a component of  $f^{-1}(D_r(a))$ containing  $f^{-1}(D_s(a))$. If there is another component of  $f^{-1}(D_r(a))$ then it must be contained in a component of $\mathbb{C} \setminus f^{-1}(D_s(a))$ and hence bounded. But each component of the pre-image of every ball centered at an omitted value is unbounded. Therefore it is proved that $f^{-1}(D_r(a))$ has a single component. In other words, $f^{-1}(D_r(a))$ is connected.
\par
Let $D$ be a component of $\mathbb{C} \setminus f^{-1}(D_r(a))$. Then it is easy to note that $D$ is contained in a component of $\mathbb{C} \setminus f^{-1}(D_s(a))$. Therefore $D$ is bounded. Since each such $D$ contains a pole and $f$ has infinitely many poles by Remark~\ref{rem}(4), $f^{-1}(D_r(a))$ is infinitely connected.
	This completes the proof.
\end{proof}
\section{Existence}
A necessary and sufficient condition for existence of $\textit{bov}$ for entire functions is the content of the next theorem.
\begin{thm}
	Let $f \in E$. Then for each unbounded curve $\gamma$, $f(\gamma)$ is unbounded if and only if  $\infty$ is the $\textit{bov}$ of $f$.
	\label{boviff}
\end{thm}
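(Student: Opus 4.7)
I begin by translating the $\textit{bov}$ property at $\infty$ into Euclidean language: a spherical ball $D_r(\infty)$ has the form $\{w:|w|>R\}$ with $R\to\infty$ as $r\to 0$. Since $f$ is entire, $\infty$ is automatically omitted, so the $\textit{bov}$ requirement amounts to asking that every component of $\partial f^{-1}(D_r(\infty))=\{z:|f(z)|=R\}$ be bounded for all sufficiently large $R$; Lemma~\ref{bov-implication2} then upgrades this to every $R>0$.

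For the direct implication, I suppose $\infty$ is the $\textit{bov}$ and that an unbounded curve $\gamma$ satisfies $f(\gamma)\subseteq\overline{D_M(0)}$ for some $M>0$. Then $\gamma\subseteq\mathbb{C}\setminus f^{-1}(\{|w|>M\})$, and by Lemma~\ref{bov-implication2} every component of this complement is bounded; being connected, $\gamma$ must lie in a single bounded component, contradicting its unboundedness. For the converse I argue by contrapositive: if $\infty$ is not the $\textit{bov}$, negating the definition yields some $R>0$ for which $\{z:|f(z)|=R\}$ has an unbounded component $C$, and I will produce an unbounded continuous curve whose image lies on the circle $\{|w|=R\}$.

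To construct such a curve I exploit the structure of $\{z:|f(z)|=R\}$ as a real analytic subvariety of $\mathbb{C}$: away from the finitely many critical points of $f$ at which $|f|=R$, it is a smooth $1$-manifold by the implicit function theorem, while near each such critical point $p$ the local expansion $f(z)-f(p)=c(z-p)^k+\cdots$ shows that $\{|f|=R\}$ is locally a finite bouquet of analytic arcs meeting at $p$. Hence the variety is locally path-connected and $C$ is path-connected. Fixing a basepoint $z_0\in C$ and points $z_n\in C$ with $|z_n|\to\infty$, I choose paths $\eta_n\colon[0,1]\to C$ from $z_0$ to $z_n$ and concatenate $\eta_n$ with its reverse on each interval $[2(n-1),2n]$ to obtain a continuous $\gamma\colon[0,\infty)\to C$ whose image contains every $z_n$ and is therefore unbounded. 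Since $f\circ\gamma\subseteq\{|w|=R\}$ is bounded, this contradicts the hypothesis. I anticipate the main obstacle to be precisely this extraction step; the explicit local normal form near critical points keeps the path-connectedness argument concrete and avoids invoking general triangulation theorems for real analytic sets.
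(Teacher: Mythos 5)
Your forward implication is the paper's own argument in only slightly different clothing: both reduce to Lemma~\ref{bov-implication2}, which forces every component of $\{z:|f(z)|\le R\}$ to be bounded, so a connected unbounded $\gamma$ cannot have $f(\gamma)$ contained in any disc. Your converse, however, is genuinely different. The paper argues directly: it shows that every component $D$ of the preimage of a bounded simply connected domain $B$ is bounded (else $D$ would contain an unbounded curve with bounded image) \emph{and} simply connected (via the open-mapping argument that $\partial f(C)\subseteq\partial B$ for a bounded complementary component $C$ of $D$), invokes Casorati--Weierstrass to get infinitely many such components, and then exhibits $f^{-1}(N)$ as the complement of these bounded sets, so its boundary components are bounded. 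You instead take the contrapositive: an unbounded boundary component of some $f^{-1}(\{|w|>R\})$ is an unbounded component $C$ of the level set $\{|f|=R\}$, from which you extract an unbounded curve with image on the circle $|w|=R$. Your route is conceptually shorter and isolates the real content (an unbounded piece of a level set carries an unbounded curve), but it pays for this with the real-analytic structure theory of $\{|f|=R\}$; the paper's route is longer but elementary and, as a by-product, establishes facts the paper reuses elsewhere (boundedness and simple connectivity of preimage components of bounded simply connected domains, cited in the $\sin z$ example).

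Two caveats you should tighten. First, ``away from the finitely many critical points of $f$ at which $|f|=R$'' is wrong as stated: an entire function may have infinitely many critical points on a level set. What you need, and what is true, is that they are locally finite (finitely many in each compact set), which still gives local path-connectedness of $\{|f|=R\}$ and hence path-connectedness of the component $C$. Second, your back-and-forth concatenation produces a curve with unbounded image but one that returns to $z_0$ infinitely often, so it does not tend to $\infty$. The paper never defines ``unbounded curve,'' and its own usage (a curve whose image is unbounded) is consistent with your construction; but if the intended meaning is a curve tending to $\infty$ (as in the definition of asymptotic value), you would need an extra step -- e.g., the local finiteness of the level set as a one-complex plus a K\H{o}nig-type argument to extract a genuine ray to infinity inside $C$. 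Both points are repairable, but as written they are the places where your proof is not yet airtight.
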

\begin{proof}
	Let $\infty$ be the $\textit{bov}$ of $f$ and $\gamma$ be an unbounded curve.
	Then it follows from Lemma~\ref{bov-implication2} that for each ball $D_r$
	centered  at $\infty$ with radius $r$,  $f^{-1}(D_r)$ is infinitely connected and each component of $\mathbb{C} \setminus f^{-1}(D_r)$ is bounded.  In other words, $f^{-1}(D_r) \cap \gamma \neq \emptyset$ for every $r>0$. That means $f(\gamma)$ intersects $D_r$ for every $r$ and hence $f(\gamma)$ is unbounded.
	\par
	To prove the converse, let $f(\gamma)$ be unbounded for each unbounded curve $\gamma$.
Let $B$ be a simply connected and bounded domain and $D$ be a component of $f^{-1}(B)$.
	If $D$ is unbounded then there exists an unbounded curve $\gamma$ in $D$ and consequently  $f(\gamma)$ is unbounded. But it cannot be true since $f(z) \in B$ for all $z \in D$. Thus each $D$ is bounded.
	Now, suppose that there is a bounded component of $\mathbb{C}\setminus D$.
	Let it be $C$ and consider $w\in \partial f(C)$. If $f(z) =w$ then $z$ does not belong to the interior
	of $C$ by the Open Mapping Theorem. Consider a sequence $\{w_n\}_{n>0}$ in $f(C)$ such that
	$w_n \rightarrow w$ as $n \to \infty$. Then there is  $z_n \in C$ such that $f(z_n)=w_n$. Each limit point of the sequence $\{z_n\}_{n>0}$ is in $\overline{C}$. If $z$ is such a limit point then by the continuity of $f$ at $z$,
 $f(z)=w$ and $z\in \partial C$. Now taking
	a sequence $\{b_n\}_{n>0}$  in $D$ such that $b_n \rightarrow z$ as $n \to \infty$ it follows, by continuity of  $f$ at $z$,
	that $f(b_n) \to f(z)$ as $n \to \infty$. But $f(b_n) \in f(D)=B$ gives that $ f(z) \in \partial B$.
	Thus $f(z)=w\in \partial B$ proving that $\partial f(C)\subseteq \partial B$.
	If $\partial f(C)\subsetneq \partial B $ then $f(C)$ is unbounded. Also,
	if $\partial f(C)= \partial B$ then $f(C)=\mathbb{C} \setminus B$ and hence $f(C)$ is unbounded.
	However, this is not possible as $f$ is entire and $C$ is bounded.
	Thus $\mathbb{C}\setminus D$ has no bounded component and hence each $D$ is simply connected.
	Now, it follows from the Casorati-Weirstrass Theorem that $f^{-1}(B)$  has infinitely many components.
	\par
	Let $N$ be a neighborhood of $\infty$. Then $\mathbb{C}\setminus N$ is a bounded simply connected domain.
	There are infinitely many components of $f^{-1}(\mathbb{C}\setminus N) $ and each such component is bounded by the conclusion of the previous paragraph. Let these components be $D_i, ~i=1,2,3,...$.
	We claim that $f^{-1}(N)= \mathbb{C}\setminus \bigcup_{i=0}^{\infty} D_i$.
	If $z \in f^{-1}(N)$ then $f(z) \in N$ and consequently $z\in \mathbb{C}\setminus \bigcup_{i=0}^{\infty} D_i$. Conversely letting $z\in \mathbb{C}\setminus \bigcup_{i=0}^{\infty} D_i$ it follows that $f(z)\in N$. Consequently, $z\in f^{-1}(N)$. Therefore  $f^{-1}(N)= \mathbb{C}\setminus \bigcup_{i=0}^{\infty} D_i$. This implies that $\infty$ is the $\textit{bov}$ of $f$.\\
	
\end{proof}
Baker omitted value is a special property of the inverse of $f$ whereas
Baker wandering domain is a dynamical aspect of a function i.e., a property of $f^n$.
The following result connecting $\textit{bov}$ with Baker wandering domains follows as a consequence of Theorem~\ref{boviff}.
\begin{thm}
	If an entire function has a Baker wandering domain then it has the $\textit{bov}$.
	\label{bwdbov}
\end{thm}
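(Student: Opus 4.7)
The plan is to reduce the theorem to Theorem~\ref{boviff}: since $f$ is entire, it suffices to show that $f(\gamma)$ is unbounded for every unbounded curve $\gamma\subset\mathbb{C}$. So let $\gamma$ be an unbounded curve and suppose, toward a contradiction, that $f(\gamma)\subset B_M(0)$ for some $M>0$.

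Let $U$ be the Baker wandering domain and $U_n$ the Fatou component containing $f^n(U)$. By hypothesis, for all large $n$ the component $U_n$ is bounded, multiply connected, surrounds $0$, and $f^n\to\infty$ uniformly on compact subsets of $U$. The key ingredient I would invoke is the classical companion fact (essentially Baker's theorem on multiply connected Fatou components): for every $R>0$ there is $N$ such that $B_R(0)$ lies in a bounded component of $\mathbb{C}\setminus U_n$ for all $n\geq N$. I would prove this by picking a non-contractible Jordan curve $\sigma$ in some $U_{N_0}$ surrounding the hole of $U_{N_0}$ containing $0$, and observing that $\sigma_k=f^k(\sigma)\subset U_{N_0+k}$ satisfies $\min_{w\in\sigma_k}|w|\to\infty$ by uniform convergence, while the argument principle preserves a positive winding number of $\sigma_k$ about $0$; together these force the inner hole of $U_{N_0+k}$ containing $0$ to eventually engulf any preassigned disk.

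Granted this growth property, choose $R>\max(M,|\gamma(0)|)$ and pick $n$ so large that both $U_n$ and $U_{n+1}$ surround $B_R(0)$. Inside $U_n$ select a Jordan curve $\tau$ enclosing $B_R(0)$ (which exists because $U_n$ is a domain surrounding $B_R(0)$). Since $\gamma(0)$ lies in the region bounded by $\tau$ while $\gamma$ is unbounded, continuity forces some $t^*$ with $\gamma(t^*)\in\tau\subset U_n$. Then $f(\gamma(t^*))\in f(U_n)$, and since $f(U_n)$ is a connected subset of the Fatou set containing $f^{n+1}(U)$, it lies in $U_{n+1}$. But $U_{n+1}$ surrounds $B_R(0)\supset B_M(0)$ and is therefore disjoint from $B_M(0)$, giving $|f(\gamma(t^*))|>M$, contradicting $f(\gamma(t^*))\in f(\gamma)\subset B_M(0)$.

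The main obstacle is justifying the ``inner hole grows without bound'' property of $U_n$, for which the winding-number preservation of $\sigma_k$ is the crux; everything else is planar topology combined with Theorem~\ref{boviff}.
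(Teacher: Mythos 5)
Your proof is correct and follows essentially the same route as the paper's: show that every unbounded curve $\gamma$ must cross $U_n$ for all large $n$, so that $f(\gamma)$ meets $U_{n+1}$, which recedes to $\infty$, and then invoke Theorem~\ref{boviff}. The only real difference is that you explicitly isolate and sketch the classical fact that the holes of $U_n$ eventually engulf every disk, a step the paper's proof uses implicitly when it asserts without comment that every unbounded curve intersects $W_n$ for infinitely many $n$.
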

\begin{proof}
	Let $W$ be a Baker wandering domain of an entire function $f$.
	Then every unbounded curve $\gamma$ intersects $W_n$ and $f(\gamma)$ intersects $W_{n+1}$ for infinitely many values of $n$. Since $f^n(z) \to \infty$ for all $z \in W$ by the definition of Baker wandering domain, the intersection $ \bigcup_{n>0} f(\gamma) \bigcap W_{n+1}$ is unbounded.  Therefore $f(\gamma)$ is unbounded.  Now it follows from Theorem~\ref{boviff} that $\infty$ is the $\textit{bov}$ of $f$.
\end{proof}
\begin{example}
	This is the classical example given by Baker~\cite{baker}. The entire function $g(z)$ given by the canonical product
	$$g(z)=Cz^2\prod^{\infty}_{n=1}\left( 1+\frac{z}{r_n} \right),\    1<r_{1}<r_{2}<...,\ C>0, $$ where $r_n$ satisfies some growth condition is an example of entire function having a Baker wandering domain. Therefore, $g$ has  $\textit{bov}$ and that is, of course, $\infty$.\end{example}
\begin{rem}
	If $f \in M$ has a Baker wandering domain then it has no finite
	asymptotic value by Theorem 9 ~\cite{tk-zheng}.
	In particular, it has no $\textit{bov}$. Hence Theorem~\ref{bwdbov} is never true for meromorphic functions.
\end{rem}

The converse of Theorem~\ref{bwdbov} is not true in general as given in the following example.
\begin{example}
	Let $f_{\lambda}(z)=e^z+z+ \lambda, \lambda \geq 0$.
	Let $\gamma$ be an unbounded  curve.
	If the set $A= \{\Re(z): z \in \gamma\}$ is bounded then  $\{\Im(z): z \in \gamma\}$ is unbounded.
	But, by the mapping property of $e^z$, there exists  $M>0$ such that $|e^z| \le M$ for all $z \in \gamma$.
	Since $|f_{\lambda}(z)| \geq ||z+ \lambda|-|e^z|| \geq  |z+ \lambda|-M  $ and
	$ |z+ \lambda|-M  \to \infty$ as $z \to \infty$, $f_{\lambda}(\gamma)$ is unbounded.
	If $A$ is unbounded then consider a sequence $z_n$ in $\gamma$ such that $\Re(z_n) \to +\infty$ or $-\infty$ as $n \to \infty$. If $\Re(z_n) \to +\infty$ then $e^{z_n} \to \infty$. Otherwise $e^{z_n} \to 0$.
	Thus, in each case, $f_{\lambda}(z_n) =e^{z_n} +z_n + \lambda \to \infty$ as $n \to \infty$.
	Therefore, $f_{\lambda}(\gamma)$ is unbounded. By Theorem~\ref{boviff}, $\infty$ is the $\textit{bov}$ of $f_{\lambda}$.
	
	\par
	Note that if $L_k =\{z: \Im(z)= \pi k\}, k \in \mathbb{Z}$ then $f_{\lambda} (z) \in L_k$ for all $z \in L_k$.
	The function $f_\lambda$ has no finite asymptotic value and the critical values are $\lambda -1+ i \pi (2k +1)$. Further, if $z^*$ is a fixed point of $f_{\lambda}$ then its multiplier is $1- \lambda$. In other words, all the fixed points of $f_{\lambda}$ are attracting when $0< \lambda < 2$. Note that $z_k=\log |-\lambda|+i \pi k$ is a fixed point of $f_{\lambda}$ for all odd $k$ and these are the only fixed points.
	We assert that the attracting basin of $z_k$ is unbounded for $0< \lambda <1$.
	   \begin{figure}[ht!]
		\centering
		\includegraphics[scale=0.5]{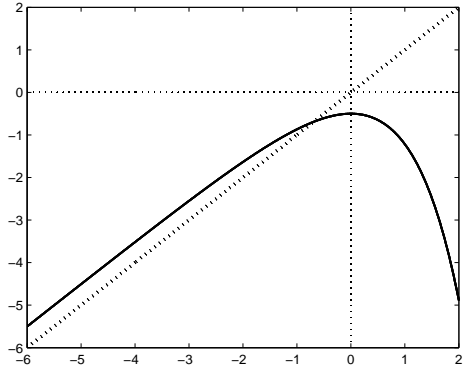}
		\caption{Graph of $\phi(x)=-e^x+x+ \lambda, 0< \lambda <1$. \label{bovbutnotbwd-example}}
	\end{figure}
	\par
	For odd $k$, $T_{i \pi k}^{-1}(f_\lambda( T_{i \pi k}(x)))= \phi(x)$ where $T_{i \pi k}(x)= x+ i \pi k$  and $\phi(x)= -e^x+x +\lambda$. The function $\phi(x)$ has a fixed point at $\log  \lambda $ and this is negative as $0 < \lambda <1 $. Further, $\phi(x)$ is increasing on the negative real axis, attains its maximum at $0$ and then decreases. It also follows that $\phi(x)> x$ for all $x < \log  \lambda $. This means that $\phi^n(x) \to \log \lambda $ as $n \to \infty$ for all $x< \log  \lambda $. Thus $f_{\lambda}^n(z) \to z_k ~\mbox{as}~ k \to \infty ~\mbox{for all}~ z \in L_k$ for odd $k$ and such that $\Re(z) < \log \lambda$. Therefore, the basin of attraction of $z_k$ is unbounded. Note that if a function has a Baker wandering domain then all its Fatou components including the Baker wandering domains are bounded. This gives that $f_{\lambda}$ has no Baker wandering domain.
\label{example3}
\end{example}

Sufficient conditions for existence of $\textit{bov}$ for meromorphic functions follows from some earlier work reported by Nayak ~\cite{tkk}.
\begin{thm} Let $f \in M$  have a single omitted value $a$,  $a\in\mathcal{J}(f)$ and $\mathcal{J}_{a}$, the component of the Julia set containing $a$ is singleton. If there is a non-contractible Jordan curve in the Fatou set and a subsequence $m_k$  such that $\lim_{k \to \infty} f^{m_k}(\gamma) =\infty$ and $f^{m_k}(\gamma)$ surrounds $a$ for all large $k$ then  $f$ has $\textit{bov}$.
	\label{suffbovmero}
\end{thm}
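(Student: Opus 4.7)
The plan is to verify the $\textit{bov}$ condition at $a$, namely that for some $r_0>0$ and every $r\in(0,r_0)$ each connected component of $\partial f^{-1}(D_r(a))$ is bounded. The strategy combines a local separation argument near $a$, supplied by $\mathcal{J}_a=\{a\}$, with the global surrounding geometry provided by the forward iterates $f^{m_k}(\gamma)$.

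First, I would use the hypothesis that $\{a\}$ is a singleton component of the compact set $\mathcal{J}(f)\subset\widehat{\mathbb{C}}$ to produce, by a standard topological separation argument, arbitrarily small Jordan curves $\gamma'\subset\mathcal{F}(f)$ enclosing disks $V\ni a$ with $\partial V\subset\mathcal{F}(f)$. Fix $r$ small enough that $\overline{D_r(a)}\subset V$. By complete invariance of $\mathcal{F}(f)$, every component of $f^{-1}(\partial V)$ is a curve in $\mathcal{F}(f)$, and each component of $f^{-1}(V)\supset f^{-1}(D_r(a))$ is enclosed by such curves; note that each component of $f^{-1}(V)$ is unbounded because $a$ is omitted.

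Next, I would exploit the hypothesis on $\gamma$ at infinity. Let $\Omega_k$ denote the bounded complementary region of $f^{m_k}(\gamma)$ that contains $a$. Since $f^{m_k}(\gamma)\to\infty$ and surrounds $a$, the sets $\Omega_k$ increase to exhaust $\mathbb{C}$, and for $k$ sufficiently large $\overline{V}\subset\Omega_k$ while $f^{m_k}(\gamma)\subset\{|w-a|>r\}$. Pulling $f^{m_k}(\gamma)$ back under $f^{m_k}$ produces a locally finite family of curves in $\mathcal{F}(f)$ containing $\gamma$ itself as one of its components. By the non-contractibility of $\gamma$ in $\mathcal{F}(f)$ together with the infinitude of poles of $f$ (Remark~\ref{rem}(4)), these pullback curves decompose $\mathbb{C}$ into bounded cells, each containing at least one preimage of $D_r(a)$. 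Every component of $\mathbb{C}\setminus f^{-1}(D_r(a))$ therefore lies inside a single bounded cell and is bounded; hence every component of $\partial f^{-1}(D_r(a))$, being contained in the closure of such a complementary component, is also bounded, giving the $\textit{bov}$ property.

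The principal obstacle is the pullback-decomposition step: establishing that $f^{-m_k}(f^{m_k}(\gamma))$ produces a locally finite family of Jordan curves in $\mathcal{F}(f)$ whose bounded complementary components tile $\mathbb{C}$ as $k\to\infty$, with each tile containing a preimage of $D_r(a)$. This is precisely the machinery developed in Nayak~\cite{tkk}, cited in the sentence preceding the theorem statement; the proof plan is essentially to apply that machinery in the present setting. Without invoking \cite{tkk}, one would need to carry out a careful pole-by-pole pullback and winding-number analysis to produce the same decomposition.
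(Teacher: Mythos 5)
Your proposal has genuine gaps and misses the idea that actually drives the paper's proof. The paper's argument is short and direct: since $a$ is omitted, the singularity over $a$ is direct, so there is an asymptotic path $\alpha$ (an unbounded curve along which $f\to a$). Because the curves $\gamma_{m_k}=f^{m_k}(\gamma)$ tend to $\infty$ and eventually surround every finite point, $\alpha$ must cross each $\gamma_{m_k}$ at some $z_k\to\infty$ with $f(z_k)\to a$. Since $\gamma_{m_k}$ surrounds a pole for large $k$, Lemma~1 of \cite{tk-zheng} gives that $f(\gamma_{m_k})$ surrounds $a$ and converges to $a$ or to $\infty$; the points $f(z_k)\to a$ rule out the second alternative, so $f(\gamma_{m_k})\to a$. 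Now if some component $\beta$ of $\partial f^{-1}(B)$ were unbounded, it too would meet $\gamma_{m_k}$ at points $w_k$ with $f(w_k)\in\partial B$, contradicting $f(w_k)\to a$. None of this appears in your plan: you never invoke the asymptotic path to $a$, which is the linchpin that converts the hypothesis ``$f^{m_k}(\gamma)\to\infty$ and surrounds $a$'' into the statement ``$f\to a$ uniformly on $\gamma_{m_k}$''.

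Beyond the missing idea, two steps of your plan fail as written. First, your opening separation argument is circular: knowing that $\partial V\subset\mathcal{F}(f)$ tells you nothing about the components of $f^{-1}(\partial V)$ being bounded --- that is precisely the \textit{bov} condition you are trying to prove, and complete invariance of the Fatou set does not bound them. Second, the cell-decomposition conclusion cannot work in the form you state it: since $a$ is omitted, \emph{every} component of $f^{-1}(D_r(a))$ is unbounded, so no such component can sit inside a bounded cell; and a component of $\mathbb{C}\setminus f^{-1}(D_r(a))$ is not confined to a single cell unless the cell walls lie inside $f^{-1}(D_r(a))$, which the pullback curves $f^{-m_k}(\gamma_{m_k})$ have no reason to do. Deferring the decomposition to the machinery of \cite{tkk} does not repair these issues, because the inference you want to draw from the decomposition is itself invalid.
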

\begin{proof}
	Let $\gamma_{m_k}$ denote the image of $\gamma$ under $f^{m_k}$.
	Since $\lim_{k \to \infty}\gamma_{m_k} = \infty$ and  $\gamma_{m_k}$ surrounds $a$, the asymptotic path corresponding to $a$ ( which is an unbounded curve ) intersects $\gamma_{m_k}$, say at $z_k$ for infinitely many $k$.
Further, it follows that $\gamma_{m_k}$ surrounds every finite complex number  for all large $k$.
In particular, $\gamma_{m_k}$ surrounds a pole  for all large $k$ and this gives that $f(\gamma_{m_k})$ surrounds $a$ for all large $k$ (See Lemma~1\cite{tk-zheng}).
 Consequently, $f(\gamma_{m_k}) \to a~\mbox{or~} \infty$. It is obvious that $z_k \to \infty$ and $f(z_k) \to a$ as $k \to \infty$. Therefore,  $f(\gamma_{m_k}) \to a$ as $k \to \infty$.
	\par
	Let $B$ be a neighborhood of $a$ and the boundary of $f^{-1}(B)$ have an unbounded component $\beta$.
Then $f(\beta) \subseteq \partial B$. Note that $\beta$ intersects $\gamma_{m_k}$, say at $w_k$ for infinitely many $k$ and hence $f(w_k) \in \partial B$. But by the conclusion of the last paragraph, $f(w_k) \to a$ as $k \to \infty$. This is a contradiction. Therefore, each component of the boundary of $f^{-1}(B)$ is bounded. Hence $f$ has $\textit{bov}$.
\end{proof}
An immediate corollary follows. A Julia component is called buried if it is not contained in the boundary of any Fatou component.
\begin{cor} Let $f \in M$ have a single omitted value $a$ and $a\in\mathcal{J}(f)$.
 If $\mathcal{J}_{a}$, the component of the Julia set containing $a$ is singleton and non-buried  then $a$ is the $\textit{bov}$ of $f$.
 \label{singletonnonburied}
\end{cor}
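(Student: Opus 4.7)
The plan is to verify the hypotheses of Theorem~\ref{suffbovmero} and then invoke it: I need to exhibit a non-contractible Jordan curve $\gamma$ in $\mathcal{F}(f)$ together with a subsequence of iterates $\{f^{m_k}(\gamma)\}$ that tends to $\infty$ while surrounding $a$ for all large $k$. The singleton-component assumption will supply $\gamma$; the non-buriedness hypothesis will locate $\gamma$ inside a specific Fatou component; and the subsequence will come from a forward-iteration argument that is the main obstacle.

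For the Jordan curve, the set $\mathcal{J}(f)$ is closed and hence compact in $\widehat{\mathbb{C}}$, and $\{a\}$ is a component of it. The classical fact that in a compact Hausdorff space a component coincides with its quasi-component yields, for every $\epsilon>0$, a subset $E$ which is clopen in $\mathcal{J}(f)$ and satisfies $a\in E\subset D_{\epsilon}(a)\cap\mathcal{J}(f)$. Separating the disjoint compact sets $E$ and $\mathcal{J}(f)\setminus E$ in $\widehat{\mathbb{C}}$ by a Jordan curve lying in their common complement produces $\gamma\subset\mathcal{F}(f)$ of arbitrarily small diameter with $a$ in its bounded complementary component. Since that bounded component meets $\mathcal{J}(f)$ at the point $a$, the curve $\gamma$ is non-contractible in $\mathcal{F}(f)$. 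Non-buriedness now gives a Fatou component $U$ with $a\in\partial U$, so $U$ accumulates at $a$. When $\gamma$ is chosen small enough the punctured disk bounded by $\gamma$ with $a$ removed is a connected subset of $\mathcal{F}(f)$ meeting $U$; hence it lies entirely in $U$, so $\gamma\subset U$ and $U$ is multiply connected.

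The main obstacle is producing the subsequence $\{m_k\}$. The bounded complementary component of $\gamma$ containing $a$ is pole-free, since its only Julia point is $a$ itself and $a$, being omitted, is not a pole of $f$. On the other hand, by Remark~\ref{rem}(4) $f$ has infinitely many poles, all of which necessarily lie outside this disk. My proposed route is to track the forward iterates $f^n(\gamma)\subset U_n$, and use the multiply connected structure of the orbit of $U$ together with the abundance of poles to force some iterate $f^{m_k}(\gamma)$ to enclose a pole; the next iterate $f^{m_k+1}(\gamma)$ then surrounds $\infty$, and hence every finite value, in particular $a$. Extracting a subsequence along which $f^{m_k}(\gamma)\to\infty$ in the spherical metric while continuing to surround $a$ supplies the remaining hypothesis, and Theorem~\ref{suffbovmero} concludes that $a$ is the $\textit{bov}$ of $f$. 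Making the escape-and-surround step precise is the delicate part, because multiply connected Fatou components of meromorphic maps admit quite varied dynamics (Baker-type wandering domains on the one hand, and periodic structures on the other) that have to be handled uniformly.
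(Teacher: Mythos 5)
Your construction of the non-contractible Jordan curve $\gamma$ and its placement inside a Fatou component $U$ with $a\in\partial U$ is sound (and in fact more explicit than the paper, which simply asserts that $U$ is infinitely connected). But the core of the proof --- producing a subsequence $m_k$ with $f^{m_k}\to\infty$ \emph{uniformly on} $\gamma$ while $f^{m_k}(\gamma)$ surrounds $a$ --- is exactly the step you leave as an acknowledged obstacle, and your proposed route does not close it. Forcing some $f^{m_k}(\gamma)$ to enclose a pole only yields that the next image surrounds $a$; it says nothing about the whole curve $f^{m_k}(\gamma)$ tending to $\infty$ in the spherical metric, which is what Theorem~\ref{suffbovmero} requires. ``Extracting a subsequence along which $f^{m_k}(\gamma)\to\infty$'' is asserted, not proved.

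The paper closes this gap differently. It invokes Lemma~1 of \cite{tk-zheng}, which gives that on a multiply connected Fatou component of a map in $M$ with omitted value $a$, some subsequence $f^{m_k}$ converges on $\gamma$ to one of the two constants $a$ or $\infty$, with $f^{m_k}(\gamma)$ surrounding $a$. The remaining work is to exclude (or repair) the alternative limit $a$, and this is where the non-buriedness hypothesis is used a \emph{second} time, which your argument misses entirely: if $U$ is wandering and $f^{m_k}\to a$ on $\gamma$, then $\mathcal{J}_a$ would be buried, contradicting the hypothesis; if $U$ is eventually periodic, the cycle is shown to consist of Baker domains, and in the case $f^{m_k}\to a$ one has $f^n(a)=\infty$ for some $n$, so replacing $m_k$ by $m_k+n$ restores convergence to $\infty$ while Lemma~1 of \cite{tk-zheng} keeps the surrounding property. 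Without this dichotomy (wandering versus periodic) and without the second use of non-buriedness, your argument cannot rule out that every escaping-type subsequence on $\gamma$ actually converges to $a$ rather than to $\infty$, so the hypotheses of Theorem~\ref{suffbovmero} are not verified.
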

\begin{proof}
	Since  $\mathcal{J}_{a}$ is singleton and non-buried, there is a Fatou component $U$ of $f$ such that $\mathcal{J}_{a} \subset \partial U$. It can be shown that $U$ is infinitely connected. Considering a non-contractible
	Jordan curve $\gamma$ in $U$, it is seen that $f^{m_k}(\gamma)$ surrounds $a$ and $f^{m_k} \to \infty$ or $a$ on $\gamma$ as $k \to \infty$ for some subsequence $m_k$ (See Lemma~1\cite{tk-zheng}).
	\par
	If $U$ is wandering then $f^{m_k} \to a$ on $\gamma$ is possible only when $\mathcal{J}_a$ is singleton.
	But in that case, $\mathcal{J}_a$ is buried which is contrary to the assumption.
	Therefore $f^{m_k} \to \infty$ on $\gamma $ whenever $U$ is wandering and we are done by Theorem~\ref{suffbovmero}.
	\par Let $U_m$ be periodic for some $m  \geq 0$. It can be shown that $U_k$ is infinitely connected for all $k \geq m$ (See Lemma 3.5~\cite{tkk}). Therefore $U_m$ is neither a Siegel disk nor a Herman ring. Since $a$ or $\infty$ is a limit function of the sequence of iterates of $f$ on $U_m$, the Fatou component $U_m$ can neither  be an attracting domain nor a parabolic domain. Therefore $U_m$ is a Baker domain and $f^{m_k} \to \infty~\mbox{or}~ a$ as $k \to \infty$ on $\gamma$. If  $f^{m_k} \to \infty$ as $k \to \infty$ on $\gamma$ then we are done by Theorem~\ref{suffbovmero}.
Otherwise, that means, if $f^{m_k} \to a$ as $k \to \infty$ on $\gamma$ then $f^n(a)=\infty$ for some $n$. Consequently, $f^{m_k +n}(\gamma)$ surrounds $a$ by Lemma~1\cite{tk-zheng} and $f^{m_k+n} \to \infty$ as $k \to \infty$ on $\gamma$.
 Now, the proof follows from Theorem~\ref{suffbovmero}.
\end{proof}

\section{Completely invariant Fatou components}
Following lemma deals with Fatou components when the $\textit{bov}$ is in the Fatou set.
\begin{lemm}
	Let $f \in M$ have $\textit{bov}$ $a$ and $a$ is in the Fatou set of $f$.
	Then the following are true about $f$.
	\begin{enumerate}
		\item There exists exactly one unbounded Fatou component.
		\item If $U$ is a completely invariant Fatou component then $a \in U$.
		\item If $a$ is in an unbounded Fatou component $V$ then $V$ is completely invariant.
	\end{enumerate}
	\label{bovinfatouset}
\end{lemm}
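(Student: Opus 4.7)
The plan is to prove the three parts in order, leaning on Lemma~\ref{bov-implication2} for the connectivity and bounded-complement structure of $f^{-1}(D_r(a))$ and on the Great Picard Theorem at the essential singularity $\infty$. Throughout I fix $r$ small enough that $D_r(a)$ lies inside the Fatou component $U_a$ containing $a$, so that $f^{-1}(D_r(a)) \subseteq \mathcal{F}(f)$.

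For (1), the set $f^{-1}(D_r(a))$ is connected (by Lemma~\ref{bov-implication1}), unbounded (since $a$ is omitted, each component of its preimage is unbounded), and has only bounded complement components (Lemma~\ref{bov-implication2}). Being connected and contained in the Fatou set, it lies inside a single Fatou component $V$, which is therefore unbounded. For uniqueness, any unbounded connected subset of $\mathbb{C}$ must meet $f^{-1}(D_r(a)) \subseteq V$ (otherwise it would be trapped in a bounded complement component), so any other unbounded Fatou component must coincide with $V$.

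For (2), suppose $a \notin U$. Complete invariance rewrites as $f^{-1}(U) = U$, so all infinitely many preimages of any non-Picard-exceptional $z \in U$ (such a $z$ exists since there are at most two exceptional values and $U$ is open) lie in $U$ and accumulate at $\infty$ by the Great Picard Theorem; hence $U$ is unbounded, and by (1) we must have $U = V$. Now choose a small ball $D$ around $a$ contained in the Fatou component of $a$ and disjoint from $V$; the forward invariance $f(V) \subseteq V$ forces $f^{-1}(D) \cap V = \emptyset$, while the reasoning from (1) applied to $D$ places the connected unbounded set $f^{-1}(D)$ inside $V$. Thus $f^{-1}(D) = \emptyset$, contradicting the fact that $f$ assumes every value in $D \setminus \{a\}$ (in fact infinitely often).

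For (3), $V$ is the unique unbounded Fatou component by (1). Forward invariance follows via the same trick as in (2): $f(V)$ is contained in a single Fatou component $V'$ which also contains the nonempty open set $f(f^{-1}(D_r(a))) \subseteq D_r(a) \setminus \{a\} \subseteq V$, forcing $V' = V$. The main obstacle is backward invariance. Any component $W$ of $f^{-1}(V)$ with $W \neq V$ would be a bounded Fatou component with $f(W) \subseteq V$; I plan to eliminate such a $W$ by showing that $f|_W \colon W \to V$ is a proper map. Properness is established by observing that a sequence $\{z_n\}$ in $f^{-1}(K) \cap W$ for compact $K \subseteq V$ cannot cluster on $\partial W$: a non-pole limit $z^{\ast} \in \partial W$ would yield $f(z^{\ast}) \in K \cap \mathcal{J}(f) \subseteq V \cap \mathcal{J}(f) = \emptyset$, while a pole limit would force $f(z_n) \to \infty$ inside the bounded set $K \subseteq \mathbb{C}$. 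Properness combined with the open mapping theorem makes $f(W)$ clopen in the connected set $V$, so $f(W) = V$; but then $a \in V = f(W)$ contradicts the fact that $a$ is omitted.
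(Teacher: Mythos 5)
Your proof is correct, and while parts (1) and (2) track the paper's argument fairly closely, you diverge in two places worth noting. For uniqueness in (1), the paper shows every Fatou component other than $V$ is bounded by applying Lemma~\ref{bovnotinball} to the Fatou component containing its image, whereas you observe that any unbounded connected set must meet the connected set $f^{-1}(D_r(a))$ because all components of its complement are bounded; your route is more direct and sidesteps the minor issue that Lemma~\ref{bovnotinball} is stated for balls rather than for arbitrary domains avoiding $a$. The substantial difference is in (3): the paper passes from $f^{-1}(B_r(a))\subseteq V$ to $f^{-1}(V)\subseteq V$ in a single sentence, leaving implicit why a component of $f^{-1}(V)$ other than $V$ itself cannot occur; you close exactly this gap by showing that such a component $W$ would be bounded, that $f|_W\colon W\to V$ is proper (cluster points on $\partial W$ are excluded because $\partial W\subseteq\mathcal{J}(f)$ while $K\subseteq V\subseteq\mathcal{F}(f)$, and a pole limit would push images to $\infty$), hence $f(W)$ is clopen in the connected set $V$ and so $f(W)=V$, contradicting the omittedness of $a\in V$. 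Your treatment of forward invariance (mapping $f(f^{-1}(D_r(a)))=D_r(a)\setminus\{a\}$ back into $V$) is essentially the paper's, just run in the opposite order. In short, your version supplies a complete justification of backward invariance where the paper is terse; the paper's version buys brevity at the cost of an unargued step.
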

\begin{proof}
	\begin{enumerate}
		\item
		Let $U$ be the Fatou component containing $a$. Then there exists a ball $B_r (a)$ centered at $a$ contained in $ \mathcal{F}(f)$. By definition of $\textit{bov}$, $f^{-1}(B_r(a))$ is an infinitely connected unbounded  subset of the Fatou set. This means that there is a single Fatou component, say $V$ such that $f(V)=U$. Clearly $V$ is unbounded. If $V$ is the only Fatou component of $f$  we are done. Note that $U=V$ in this case. Otherwise, there is a  Fatou component $W$ different from $V$. The proof will be complete by showing that $W$ is bounded. The Fatou component $W_1$ containing $f(W)$ is clearly different from $U$. Now, it follows from Lemma~\ref{bovnotinball} that each component of $f^{-1}(W_1)$ is bounded. In particular, $W$ is bounded.
		
		\item If $U$ is a completely invariant Fatou component of $f$ then $U$ is unbounded and this is the only unbounded Fatou component of $f$  by (1) of this Lemma. It follows from the definition of $\textit{bov}$ that there exists $r>0$ such that $f^{-1}(B_r(a))$ is an unbounded subset of the Fatou set in such a way that it intersects $U$. Thus $f^{-1}(B_r(a)) \subset U$ and consequently $ B_r(a) \subset U$ since $U$ is completely invariant. Therefore $a\in U$.
		
		\item Let $a\in V$ and consider a ball $B_r(a)$ contained in $ V$. Then $f^{-1}(B_r(a))$ is a connected and unbounded subset of the Fatou set. Since $V$ is the only unbounded Fatou component, it contains $f^{-1}(B_r(a))$ giving that  $f^{-1}(V) \subseteq V$.
		We know that $f$ is continuous on $V$ which gives that $f(V)$ is connected. Since $V$ is backward invariant, we can choose a $z\in V$ such that $f(z)\in V$. Then $f(V)\cap V\neq \emptyset$. This implies $V$ is the Fatou component containing $f(V)$. Therefore $V$ is forward invariant.	Hence V is completely invariant.
		
	\end{enumerate}
\end{proof}

It is a conjecture that the number of completely invariant Fatou components of a meromorphic function is at most two. This has already been proved for rational functions, transcendental entire functions and transcendental meromorphic functions of finite type. Theorem $7$~\cite{tk-zheng} states that if $f$ is a transcendental meromorphic function with a single omitted value and $f$ has at least one critical value then the number of completely invariant Fatou components is at most two. The following result confirms a stronger version of the conjecture if the omitted values is a $\textit{bov}$. In the following theorem, let $CV_f$ and $O_f$ denote  the sets of all critical values and omitted values of $f$ respectively.

\begin{thm}
	If $f \in M$ has $\textit{bov}$ then the number of completely invariant Fatou components is at most one.
	\label{onecifc}
\end{thm}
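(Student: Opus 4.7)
Suppose, for a contradiction, that $f$ has two distinct completely invariant Fatou components $U_{1}$ and $U_{2}$. First I would record the standard fact that $\partial U = \mathcal{J}(f)$ for any completely invariant Fatou component $U$: since $U$ is open it contains a non-exceptional point $w$, and by complete invariance the backward orbit $\bigcup_{n\geq 1} f^{-n}(w)$ lies inside $U$; being the backward orbit of a non-exceptional value it is dense in $\mathcal{J}(f)$, so $\mathcal{J}(f)\subseteq \overline{U}$, and together with $U\cap \mathcal{J}(f)=\emptyset$ this gives $\mathcal{J}(f)\subseteq \partial U$, the reverse inclusion being immediate. Consequently $\partial U_{1} = \partial U_{2} = \mathcal{J}(f)$, and since $\infty\in \mathcal{J}(f)$, both $U_{1}$ and $U_{2}$ are unbounded in $\mathbb{C}$.

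I would then split the argument by the location of $a$. If $a\in \mathcal{F}(f)$, Lemma~\ref{bovinfatouset}(2) applies to each of $U_{1},U_{2}$ and forces $a\in U_{1}\cap U_{2}$, contradicting the disjointness of distinct Fatou components; this case is immediate. The substantive case is $a\in \mathcal{J}(f)$. Then $a\in \partial U_{1}\cap \partial U_{2}$, so for every ball $D_{r}(a)$ both $D_{r}(a)\cap U_{1}$ and $D_{r}(a)\cap U_{2}$ are nonempty and open. Fix $0<r<r_{0}$; by Lemma~\ref{bov-implication1} the set $V:=f^{-1}(D_{r}(a))$ is a single connected, infinitely connected open subset of $\mathbb{C}$ whose complementary components are all bounded. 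Complete invariance gives $f^{-1}(D_{r}(a)\cap U_{i}) \subseteq U_{i}$ for $i=1,2$, so $V$ contains two nonempty, disjoint, open subsets $V\cap U_{1}$ and $V\cap U_{2}$ lying in distinct Fatou components.

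From this configuration I plan to extract a contradiction by coupling the connectivity of $V$ with Lemma~\ref{bovnotinball}: applied to small balls $B_{1}\subset U_{1}$ and $B_{2}\subset U_{2}$ whose closures are disjoint from $a$, the lemma forces each component of $f^{-1}(B_{i})$ to be bounded and contained in $U_{i}$. Pushing this information to larger radii via Lemma~\ref{bov-implication2} should trap the unbounded tails of $U_{1}$ and $U_{2}$ inside a single connected neighbourhood of $\infty$ on which $f$ takes values close to $a$, contradicting the topological separation of $U_{1}$ from $U_{2}$ by their common boundary $\mathcal{J}(f)$.

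The main obstacle is precisely this last step: converting the coexistence of two disjoint completely invariant Fatou components whose common boundary already contains the asymptotic singularity $a$ into a clean topological contradiction. As a fallback I would invoke Theorem~$7$ of~\cite{tk-zheng}, which bounds the number of completely invariant Fatou components by two whenever $f$ has a single omitted value and at least one critical value, and then use the additional information carried by a $\textit{bov}$ --- namely that the unique asymptotic singularity over $a$ is direct but not logarithmic, since $f^{-1}(D_{r}(a))$ is infinitely connected --- to rule out the count of two. The critical-value-free subcase, where $a$ is the only singular value of $f$, would be treated separately using the rigidity this imposes on the dynamics.
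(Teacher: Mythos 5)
There is a genuine gap. Your first case, $a\in\mathcal{F}(f)$, is handled correctly and cleanly: applying Lemma~\ref{bovinfatouset}(2) to each of $U_{1},U_{2}$ puts $a$ in both, which is impossible. But the substantive case $a\in\mathcal{J}(f)$ is never closed. You observe that $V=f^{-1}(D_{r}(a))$ is connected and meets both $U_{1}$ and $U_{2}$, but that is not by itself contradictory: $V$ also meets $\mathcal{J}(f)$ (its boundary components map into $\partial D_{r}(a)$ and it contains infinitely many Julia components in its complementary ``holes''), so a single connected open set can perfectly well intersect two distinct Fatou components. The proposed ``trapping'' of the unbounded tails of $U_{1}$ and $U_{2}$ near $\infty$ is exactly the step you flag as the main obstacle, and no mechanism is given for it; the fallback via Theorem~7 of~\cite{tk-zheng} only yields the bound two, and the passage from two to one is again left unspecified.

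The paper resolves this by a different case split --- on the existence of a critical value rather than on the location of $a$. If $f$ has a critical value, Lemma~5 of~\cite{tk-zheng} forces either $CV_{f}\subset U_{1}$ or $O_{f}\subset U_{1}$; taking the latter puts the \textit{bov} $a$ inside a Fatou component, and then Lemma~\ref{bovinfatouset}(1) says there is exactly one unbounded Fatou component, contradicting the fact that two disjoint completely invariant (hence unbounded, simply connected by \cite{bkr-kts-yin}) components exist. If $f$ has no critical value, then by Theorem~\ref{oneav} the \textit{bov} is the only singular value, so $f$ is of finite type; completely invariant components cannot be Siegel disks, Herman rings, Baker domains or wandering domains, so each $U_{i}$ is attracting or parabolic and must contain a singular value --- impossible for two disjoint components sharing a single singular value. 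This singular-value counting argument is the missing idea in your sketch: it is what converts the existence of a \emph{bov} (one asymptotic value, hence finite type when there are no critical values) into the bound of one, and it never requires a direct topological separation argument at $a\in\mathcal{J}(f)$.
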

\begin{proof}
	Let $a$ be the $\textit{bov}$ of $f$.
	If there exists two completely invariant Fatou components of $f$, say $U_{1}$ and $U_{2}$ then each is simply connected \cite{bkr-kts-yin} and unbounded.
	
	Suppose $f$ has a critical value. Then either $CV_{f}\subset U_{1}$ or $O_{f}\subset U_{1}$ by Lemma 5  of ~\cite{tk-zheng}. Without loss of generality we can assume  that $O_{f}\subset U_{1}$. So the $\textit{bov}$ $a$ is in $U_{1}$. By Lemma~\ref{bovinfatouset}(1), there exists a unique unbounded Fatou component. This is a contradiction as there are two unbounded Fatou components.
	\par
	Suppose $f$ has no critical value. Since $a$ is the $\textit{bov}$, it is the
	only asymptotic value of $f$. Thus $f$ is of finite type and it has no Baker domains or wandering domains.
Since $U_1$ and $\ U_{2}$ are completely invariant, none of them is either a Herman ring or a Siegel disk.
	Thus $U_i, i=1,2 $ is an attracting or a parabolic domain and contains at least one singular value. But $a$ is the only singular value of $f$. This is a contradiction and it is proved that the number of completely invariant Fatou components is at most one.
\end{proof}

From some earlier work of Nayak and Zheng, a sufficient condition for non existence of completely invariant Fatou components follows.
\begin{rem}
	Let $f \in M$  have  $\textit{bov}$ $a$. If $a\in \mathcal{J}(f)$ and $ \mathcal{J}_{a} $ is bounded then $f$ has no completely invariant Fatou component.
\end{rem}
\begin{proof}
	If  the Julia component $\mathcal{J}_{a} $ containing $a$ is singleton then the claim follows from Corollary 1 (i) of  ~\cite{tk-zheng}. If  $\mathcal{J}_{a} $ is not singleton then there is a multiply connected Fatou component which must be a Herman ring by Theorem 1 ~\cite{tk-zheng}. But a Herman ring implies non existence of completely invariant Fatou component.
\end{proof}

\section{Questions further}
We put forth some questions arising out of this article.\par
Theorem~\ref{bwdbov} proves that if an entire function has a Baker wandering domain then it has $\textit{bov}$. That the converse is not true in general was exhibited by Example~\ref{example3}. In this context the following makes sense.
\begin{que}  Characterize all entire functions for which existence of $\textit{bov}$  implies the existence of Baker wandering domain.
\end{que}
In view of Corollary~\ref{singletonnonburied}, the following question remains unanswered.
\begin{que} Let $f \in M$ have a single omitted value $a$,
$\mathcal{J}_a$ is singleton and buried. Is it true that $a$ is $\textit{bov}$ of $f$?
\end{que}
If an omitted value of a function is a pole then it has no Herman ring. Also, if all the poles of a function having an omitted value are multiple, then it has no Herman ring. If a meromorphic function has an omitted value, then it has no Herman ring of period one and two. All these are proved in \cite{tk} and are true for functions with $\textit{bov}$. We incline to make the following conjecture.
\begin{que} Meromorphic functions with $\textit{bov}$ have no Herman ring.
\end{que}
It follows from Theorem~\ref{oneav} that  a function with $\textit{bov}$ has no asymptotic value other than the $\textit{bov}$. This restriction seems to simplify the investigation of the dynamics of the function. Thus the following is worth-doing.
\begin{que} Investigate the dynamics of entire  and meromorphic functions with $\textit{bov}$.
\end{que}

\par

 A comparison is made between entire and meromorphic functions with $\textit{bov}$ in the following table.
 \newpage

\begin{table} \caption{Comparison}
	Comparison of properties of entire and meromorphic functions with $\textit{bov}$.
	\centering 
	\begin{tabular}{  @{}ll} 
		\hline 
		$f\in E$ &    $f\in M$  \\ [0.5ex] 
		
		\hline 
		1. The $\textit{bov}$ is  always   $\infty$. &    1. The $\textit{bov}~ a$ is always finite. \\
	
		2. The $\mathcal{J}_\infty$    is singleton  if and only if   &  2. It has no Baker  wandering  \\
		   a  Baker  wandering  domain exists. &    domain.  \\
		
		3. The $\textit{bov}$ 	is always in the Julia set. & 3. The $\textit{bov}$ can be in the Fatou set.\\

		4. If $\mathcal{J}_\infty$   is not singleton then  &  4. If $\mathcal{J}_\infty$ is not singleton then every\\
	
		all the Fatou components  & Fatou component is simply connected \\
		are simply connected. &or lands on a Herman ring. \\
		5. If  $\mathcal{J}_\infty$ is singleton  &  5. If  $\mathcal{J}_a$  is singleton\\
		
	  then it is buried. &   then nothing is known. \\ \\
	\end{tabular}
	\label{table:nonlin} 
\end{table}

\baselineskip=12pt
\section*{References}


\begin{thebibliography}{00}

\providecommand{\bysame}{\leavevmode\hbox to3em{\hrulefill}\thinspace}
\bibitem{baker}
	I. N. Baker, \emph{Multiply connected domains of normality in iteration theory}, Math. Zeitschr., 81, 1963, 206-214.
\bibitem{bkr-kts-yin} I.~N. Baker, J.~Kotus, L. Yinian, \emph{Iterates of meromorphic functions III. preperiodic domains}, Ergodic Theory  Dynam. systems, 11 (1991), no-4, 603-618.

\bibitem{berg-ermk} W.~ Bergweiler and A.~Eremenko,
	\emph{On the singularities of the inverse to a meromorphic
		function of finite order}, Rev. Mat. Iberoamericana \textbf{11}
	(1995), no.2, 355373.
	
	\bibitem{tk} T. Nayak, \emph{Herman rings of meromorphic maps with an omitted value}, Proc. Amer. Math. Soc., 2015.
	\bibitem{tkk} T. Nayak, \emph{On Fatou components and omitted values}, Contemporary Mathematics, American Mathematical Society, Volume 639, 2015.
	
	\bibitem{tk-zheng} T.~Nayak and J. H. Zheng, \emph{ Omitted values and dynamics of transcendental meromorphic functions}, J. London Math. Soc.  \textbf{83} 1(2011), 121--136.
	

\end{thebibliography}
\end{document}